%%%%%%%%%%%%%%%%%%%%%%%%%%%%%%%%%%%%%%%%%
%   my paper template
%%%%%%%%%%%%%%%%%%%%%%%%%%%%%%%%%%%%%%%%%

\documentclass{amsart}
\usepackage{amsfonts, amsbsy, amsmath, amssymb}

\makeatletter
\@namedef{subjclassname@2010}{%
  \textup{2020} Mathematics Subject Classification}
\makeatother

\ifx\fiverm\undefined 
  \newfont\fiverm{cmr5} 
\fi

\newtheorem{thm}{Theorem}[section]

\newtheorem{exmp}[thm]{Example}

\newtheorem{rmk}[thm]{Remark}

\newtheorem{thm-con}[thm]{Theorem-Conjecture}
\numberwithin{equation}{section}

\theoremstyle{definition}

\allowdisplaybreaks

\newcommand{\f}{\Bbb F}
%\newcommand{\uu}{\boldsymbol u}
%\newcommand{\vv}{\boldsymbol v}
%\newcommand{\x}{\boldsymbol x}
%\newcommand{\X}{\boldsymbol X}
%\newcommand{\Y}{\boldsymbol Y}
%\newcommand{\T}{\boldsymbol t}

%%%%%%%%%%%%%%%%%%%%%%%%%%%%%%%%%%%%%%%%%%

\begin{document}

\title[radical extension of the field of rational functions]{On a radical extension of the field of rational functions in several variables}

\author[Xiang-dong Hou]{Xiang-dong Hou}
\address{Department of Mathematics and Statistics,
University of South Florida, Tampa, FL 33620}
\email{xhou@usf.edu}

\author[Christopher Sze]{Christopher Sze}
\address{Department of Mathematics and Statistics,
University of South Florida, Tampa, FL 33620}
\email{csze@mail.usf.edu}

\keywords{Galois group, Moore matrix, radical extension, rational function, Vandermonde matrix}

\subjclass[2010]{11C20, 11T06, 12F10}

\begin{abstract}
Let $F$ be a field and let $F(X_1,\dots,X_n)$ be the field of rational functions in $n$ variables $X_1,\dots,X_n$ over $F$. Let $T=X_1+\cdots+X_n\in F(X_1,\dots,X_n)$ and let $m$ be a positive integer such that $\text{char}\,F\nmid m$. Is it possible to express each $X_i$ as a rational function in $X_1^m\dots,X_n^m$ and $T$ over $F$? It is not difficult to prove that this can be done but it is another matter to show how this is done. We answer the above question affirmatively with a nonconstructive proof and a constructive proof.
\end{abstract}

\maketitle

%%%%%%%%%%%%%%%%%%%%%%%%%%%%%%%%%%%%%%%%%%%
%  section 1
%%%%%%%%%%%%%%%%%%%%%%%%%%%%%%%%%%%%%%%%%%%
\section{Introduction}

While studying permutation rational functions over finite fields, the authors of  \cite{Bartoli-Hou-arXiv2008.03432} found it necessary to express each $X_i$ in the field of rational functions $\f_q(X_1,\dots,X_4)$ as rational function in $X_1^2,\dots, X_4^2$ and $X_1+\cdots+X_4$ over $\f_q$, where $q$ is odd. More generally, consider the following question. Let $F$ be a field and $F(X_1,\dots,X_n)$ be the field of rational functions in $n$ variables $X_1,\dots,X_n$ over $F$. Let $T=X_1+\cdots+X_n\in F(X_1,\dots,X_n)$ and let $m$ be a positive integer such that $\text{char}\,F\nmid m$. Is it possible to express each $X_i$ as a rational function in $X_1^m,\dots,X_n^m, T$ over $F$? The answer is positive; a quick nonconstructive proof is given in Section~2. Now that we know that it can be done, the next question how it is done. This becomes more interesting as we are looking for a rational function $f\in F(X_1,\dots,X_n,X_{n+1})$ such that 
\[
X_1=f(X_1^m,\dots,X_n^m,T).
\]
(A transposition of $X_1$ and $X_i$ in $f(X_1^m,\dots,X_n^m,T)$ gives $X_i$.) In Section~3, we find such a rational function $f$, and in fact, $f\in F(X_1^m,\dots,X_n^m)[T]$.

\begin{rmk}\label{R1.1}\ \rm
\begin{itemize}
\item[(i)] Each $X_i$, $1\le i\le n$, can be expressed as a rational function in $X_1^m,\dots,X_n^m,T$ over $F$ if and only if 
\begin{equation}\label{1.1}
F(X_1,\dots,X_n)=F(X_1^m,\dots,X_n^m,T).
\end{equation}
By symmetry, \eqref{1.1} holds if and only if $X_1\in F(X_1^m,\dots,X_n^m,T)$.

\item[(ii)] If $n>1$ and $\text{\rm char}\,F=p\mid m$, then \eqref{1.1} does not hold. In fact,
\[
[F(X_1^p,\dots,X_n^p,T):F(X_1^p,\dots,X_n^p)]\le p
\]
since $T^p=X_1^p+\cdots+X_n^p$. On the other hand,
\[
[F(X_1,\dots,X_n):F(X_1^p,\dots,X_n^p)]=p^n>p.
\]
Hence
\[
F(X_1^m,\dots,X_n^m,T)\subset F(X_1^p,\dots,X_n^p,T)\subsetneq F(X_1,\dots,X_n).
\]
\end{itemize}
\end{rmk}

In our notation, $\text{char}\,F$ denotes the characteristic of a field $F$ and $\overline F$ denotes the algebraic closure of $F$. $\f_q$ denotes the finite field with $q$ elements. For $a=(a_1,\dots,a_n)$ and $b=(b_1,\dots,b_n)\in\Bbb Z^n$, $\langle a,b\rangle=\sum_{i=1}^na_ib_i$. The symbol $[a_{ij}]_{i\in I,\,j\in J}$ represents a matrix whose rows are labeled by a set $I$ and columns by a set $J$ and whose $(i,j)$ entry is $a_{ij}$. $\delta_{ij}$ is the Kronecker delta. $s_i(\ )$ is the $i$th elementary symmetric function.

%%%%%%%%%%%%%%%%%%%%%%%%%%%%%%%%%%%%%%%%%%%
%  section 2
%%%%%%%%%%%%%%%%%%%%%%%%%%%%%%%%%%%%%%%%%%%
\section{A Nonconstructive Proof}

\begin{thm}\label{T2.1}
Let $F$ be a field and let $T=X_1+\cdots+X_n\in F(X_1,\dots,X_n)$. Let $m$ be a positive integer such that $\text{\rm char}\,F\nmid m$. Then 
\begin{equation}\label{2.1}
F(X_1,\dots,X_n)=F(X_1^m,\dots,X_n^m,T).
\end{equation}
\end{thm}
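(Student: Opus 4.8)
The plan is to show that the larger field $L:=F(X_1,\dots,X_n)$ coincides with $K:=F(X_1^m,\dots,X_n^m,T)$ by proving $[L:K]=1$, and the natural tool is Galois theory applied after adjoining a primitive $m$th root of unity to make the relevant radical extension abelian. Set $E:=F(X_1^m,\dots,X_n^m)$, so that $K=E(T)$ and $E\subseteq K\subseteq L$. Since $\operatorname{char}F\nmid m$, the polynomial $X^m-1$ is separable and there is a primitive $m$th root of unity $\zeta\in\overline F$ generating the group $\mu_m$ of $m$th roots of unity; put $\hat E:=E(\zeta)$ and $\hat L:=L(\zeta)$. Over $\hat E$ the field $\hat L=\hat E(X_1,\dots,X_n)$ is generated by radicals with $X_i^m\in\hat E$, and $\mu_m\subseteq\hat E$, so I expect $\hat L/\hat E$ to be a Kummer extension; making this precise is the first task.

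To pin down $\operatorname{Gal}(\hat L/\hat E)$ I would avoid the irreducibility criterion for $X^m-a$ and instead squeeze the degree between two explicit bounds. From the tower $\hat E\subseteq\hat E(X_1)\subseteq\cdots\subseteq\hat L$, each step adjoining a root of $Y^m-X_i^m$, one gets $[\hat L:\hat E]\le m^n$. For a lower bound, for each $a=(a_1,\dots,a_n)\in(\Bbb Z/m\Bbb Z)^n$ the assignment $X_i\mapsto\zeta^{a_i}X_i$ defines an $F(\zeta)$-algebra automorphism of $F(\zeta)[X_1,\dots,X_n]$, hence an automorphism $\sigma_a$ of $\hat L$ fixing $\hat E$ pointwise (it fixes $\zeta$ and each $X_i^m$). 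The map $a\mapsto\sigma_a$ is an injective homomorphism, so $|\operatorname{Aut}(\hat L/\hat E)|\ge m^n$. Since always $|\operatorname{Aut}(\hat L/\hat E)|\le[\hat L:\hat E]$, all three quantities equal $m^n$, the extension $\hat L/\hat E$ is Galois, and $\operatorname{Gal}(\hat L/\hat E)=\{\sigma_a:a\in(\Bbb Z/m\Bbb Z)^n\}\cong(\Bbb Z/m\Bbb Z)^n$. A byproduct is that the $m^n$ monomials $X_1^{e_1}\cdots X_n^{e_n}$ with $0\le e_i<m$ form an $\hat E$-basis of $\hat L$; in particular $X_1,\dots,X_n$ are linearly independent over $\hat E$.

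Next I would locate $T$ in this picture. Since $\sigma_a(T)=\sum_i\zeta^{a_i}X_i$ and the $X_i$ are linearly independent over $\hat E$, we have $\sigma_a(T)=T$ if and only if $\zeta^{a_i}=1$ for all $i$, that is, if and only if $a=0$. Thus the subgroup of $\operatorname{Gal}(\hat L/\hat E)$ fixing $T$ is trivial, and the Galois correspondence gives $\hat E(T)=\hat L$. Rewriting, $\hat E(T)=E(\zeta)(T)=K(\zeta)$ while $\hat L=L(\zeta)$, so $K(\zeta)=L(\zeta)$.

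The remaining, and I expect most delicate, step is the descent from $K(\zeta)=L(\zeta)$ to $K=L$, removing the auxiliary $\zeta$. The key point is that $F$ is algebraically closed in $L=F(X_1,\dots,X_n)$, and hence (since $K\subseteq L$) also in $K$. Consequently, if $g\in F[X]$ is the minimal polynomial of $\zeta$ over $F$, a factor of the $m$th cyclotomic polynomial, then $g$ remains irreducible over both $K$ and $L$: any monic factor in $K[X]$ or $L[X]$ has coefficients that are symmetric functions of roots of unity, hence algebraic over $F$, hence lie in $F$, which would contradict irreducibility of $g$ over $F$. Therefore $[K(\zeta):K]=[L(\zeta):L]=\deg g$. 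Applying multiplicativity of degrees to $K\subseteq L\subseteq L(\zeta)=K(\zeta)$ gives $[L:K]\cdot[L(\zeta):L]=[K(\zeta):K]$, which forces $[L:K]=1$ and hence establishes \eqref{2.1}.
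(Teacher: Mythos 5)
Your proof is correct, and its heart---adjoining a primitive $m$th root of unity so that every automorphism over $F(X_1^m,\dots,X_n^m)$ acts by $X_i\mapsto\zeta^{a_i}X_i$, then observing that fixing $T=\sum_i X_i$ forces every $\zeta^{a_i}=1$ by linear independence of the $X_i$---is exactly the computation in the paper. Where you diverge is in the framing of the Galois-theoretic step, and your framing costs you an extra descent argument that the paper avoids. The paper takes $K=F(X_1,\dots,X_n,\epsilon)$ to be the Galois closure of $L=F(X_1,\dots,X_n)$ over $E=F(X_1^m,\dots,X_n^m)$ and compares the two subgroups $\text{Aut}(K/L)$ and $\text{Aut}(K/E(T))$ inside $\text{Gal}(K/E)$; since $L$ and $E(T)$ are both intermediate fields of the single Galois extension $K/E$, their equality follows at once from the Galois correspondence, with no need to remove $\epsilon$ afterwards. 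You instead adjoin $\zeta$ to both sides, prove $L(\zeta)=E(T)(\zeta)$, and then must descend to $L=E(T)$; your descent via the fact that $F$ is algebraically closed in $F(X_1,\dots,X_n)$ (so the minimal polynomial of $\zeta$ over $F$ stays irreducible over $K$ and $L$, giving $[L(\zeta):L]=[K(\zeta):K]$) is valid, but it is an extra moving part. What your version buys in exchange is self-containedness: you exhibit $\text{Gal}(\hat L/\hat E)\cong(\Bbb Z/m\Bbb Z)^n$ explicitly by squeezing the degree, and you justify the linear independence of $X_1,\dots,X_n$ over the base that the paper's final implication uses silently.
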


\begin{proof}
Since $\text{char}\,F\nmid m$, $F(X_1,\dots,X_n)$ is a finite separable extension over $F(X_1^m,\dots,X_n^m)$. Let $K$ be the Galois closure of $F(X_1,\dots,X_n)$ over $F(X_1^m,\dots,X_n^m)$. ($K=F(X_1,\dots,X_n,\epsilon)$, where $\epsilon$ is a primitive $m$th root of unity in $\overline F$.) To prove \eqref{2.1}, it suffices to show that in the Galois correspondence of $K/F(X_1^m,\dots,X_n^m)$, the corresponding groups of $F(X_1,\dots,X_n)$ and $F(X_1^m,\dots,X_n^m,T)$ are the same, i.e., 
\begin{equation}\label{2.2}
\text{Aut}(K/F(X_1,\dots,X_n))=\text{Aut}(K/F(X_1^m,\dots,X_n^m,T)).
\end{equation}
Let $\sigma\in \text{Aut}(K/F(X_1^m,\dots,X_n^m,T))$. For each $1\le i\le n$, we have $\sigma(X_i)=\epsilon^{a_i}X_i$ for some $a_i\in\Bbb Z$. Then 
\[
X_1+\cdots+X_n=T=\sigma(T)=\epsilon^{a_1}X_1+\cdots+\epsilon^{a_n}X_n.
\]
It follows that for all $1\le i\le n$, $\epsilon^{a_i}=1$, that is, $\sigma(X_i)=X_i$. Therefore $\sigma\in \text{Aut}(K/F(X_1,\dots,X_n))$ and \eqref{2.2} is proved.
\end{proof}

Assume that $\text{char}\,F\nmid m$ and let $\epsilon\in\overline F$ be a primitive $m$th root of unity. By Theorem~\ref{T2.1}, 
\[
[F(X_1^m,\dots,X_n^m,T):F(X_1^m,\dots,X_n^m)]=[F(X_1,\dots,X_n):F(X_1^m,\dots,X_n^m)]=m^n,
\]
that is, $T$ is of degree $m^n$ over $F(X_1^m,\dots,X_n^m)$. On the other hand, every conjugate of $T$ over $F(X_1^m,\dots,X_n^m)$ is of the form $\epsilon^{a_1}X_1+\cdots+\epsilon^{a_n}X_n$, where $(a_1,\dots,a_n)\in\{0,1,\dots,m-1\}^n$. Hence the minimal polynomial of $T$ over $F(X_1^m,\dots,X_n^m)$ is
\[
\prod_{(a_1,\dots,a_n)\in\{0,1,\dots,m-1\}^n}\Bigl(X-\sum_{i=1}^n\epsilon^{a_i}X_i\Bigr).
\]

\begin{rmk}\label{R2.2}\rm
In Theorem~\ref{T2.1}, $X_1,\dots,X_n$ can be replaced by elements $x_1,\dots,x_n$ in an extension of $F$. As long as $x_1+\cdots+x_n=\epsilon^{a_1}x_1+\cdots+\epsilon^{a_n}x_n$ implies $\epsilon^{a_i}=1$ for all $i$, Theorem~\ref{2.1} holds. The result in this form frequently appears as exercises in graduate algebra textbooks; see, for example, \cite[Exercise~18.14]{Isaacs-ams-gsm-2009}.
\end{rmk}

%%%%%%%%%%%%%%%%%%%%%%%%%%%%%%%%%%%%%%%%%%%
%  section 3
%%%%%%%%%%%%%%%%%%%%%%%%%%%%%%%%%%%%%%%%%%%
\section{A Constructive Proof}

Assume that $\text{char}\,F\nmid m$. Our objective is to express $X_1$ as a polynomial in $T=X_1+\cdots+X_n$ with coefficients in $F(X_1^m,\dots,X_n^m)$.

\subsection{A naive approach}\

Let's begin with an example to show what a naive approach might look like.

\begin{exmp}\label{E3.1}
\rm
Let $m=2$. We try to express $X_1$ as a rational function in $X_1^2,\dots,X_n^2,T$.

When $n=1$, $X_1=T$.

When $n=2$,
\[
T^2=X_1^2+X_2^2+2X_1X_2=X_1^2+X_2^2+2X_1(T-X_1)=-X_1^2+X_2^2+2TX_1,
\]
whence
\[
X_1=\frac{T^2+X_1^2-X_2^2}{2T}.
\]

When $n=3$, from $(T-X_1)^2=(X_2+X_3)^2$, we have
\begin{align*}
&T^2+X_1^2-2TX_1=X_2^2+X_3^2+2X_2X_3,\cr
&T^2+X_1^2-X_2^2-X_3^2-2TX_1=2X_2X_3,\cr
&(T^2+X_1^2-X_2^2-X_3^2-2TX_1)^2=4X_2^2X_3^2,\cr
&(T^2+X_1^2-X_2^2-X_3^2)^2+4T^2X_1^2-4T(T^2+X_1^2-X_2^2-X_3^2)X_1=4X_2^2X_3^2,\cr
&X_1=\frac{(T^2+X_1^2-X_2^2-X_3^2)^2+4T^2X_1^2-4X_2^2X_3^2}{4T(T^2+X_1^2-X_2^2-X_3^2)}.
\end{align*}

When $n=4$, from $(T-X_1-X_2)^2=(X_3+X_4)^2$, we have
\[
(T-X_1)^2+X_2^2-2(T-X_1)X_2=X_3^2+X_4^2+2X_3X_4.
\]
In what follows, $f_1,\dots,f_5$ are polynomials in $X_1^2,\dots,X_n^2, T$ over $F$. We have
\begin{align*}
&-2TX_1+f_1=2(T-X_1)X_2+2X_3X_4,\cr
&(-TX_1+f_1/2)^2=((T-X_1)X_2+X_3X_4)^2,\cr
&-Tf_1X_1+f_2=(T-X_1)^2X_2^2+2(T-X_1)X_2X_3X_4,\cr
&(2TX_2^2-Tf_1)X_1+f_3=2(T-X_1)X_2X_3X_4,\cr
&(T(2X_2^2-f_1)X_1+f_3)^2=(2(T-X_1)X_2X_3X_4)^2,\cr
&2T(2X_2^2-f_1)f_3X_1+f_4=4(T-X_1)^2X_2^2X_3^2X_4^2,\cr
&(2T(2X_2^2-f_1)f_3+8TX_2^2X_3^2X_4^2)X_1=f_5,\cr
&X_1=\frac{f_5}{2T((2X_2^2-f_1)f_3+4X_2^2X_3^2X_4^2)}.
\end{align*}

Unfortunately, when $n\ge 5$, it appears that this method no longer works.
\end{exmp}

\subsection{The case $\text{char}\,\boldsymbol{F=p>0}$}\

When $\text{char}\,F=p>0$, we can take advantage of the fact that $T^p=X_1^p+\cdots+X_n^p$. Since $p\nmid m$, there exists a positive integer  $e$ such that $p^e\equiv 1\pmod m$. Let $q=p^e$. The matrix 
\[
M=\left[
\begin{matrix}
X_1^{q^0}&\cdots&X_n^{q^0}\cr
\vdots&&\vdots\cr
X_1^{q^{n-1}}&\cdots&X_n^{q^{n-1}}
\end{matrix}
\right]
\]
is called the {\em Moore Matrix}. Its determinant, the {\em Moore determinant}, denoted by $\Delta(X_1,\dots,X_n)$, is given in \cite{Moore-BAMS-1896} by
\[
\Delta(X_1,\dots,X_n)=\det M=\prod_{i=1}^n\,\prod_{a_1,\dots,a_{i-1}\in\f_q}\Bigl(X_i+\sum_{j=1}^{i-1}a_jX_j\Bigr);
\]
also see \cite[Exercise~2.15]{Hou-ams-gsm-2018} or \cite[Lemma~3.51]{Lidl-Niederreiter-FF-1997}. (When $i=1$, the inner product in the above is understood to be $X_1$.) We have
\[
M\left[
\begin{matrix}
X_1^{-1}\cr 
&\ddots\cr
&&X_n^{-1}
\end{matrix}
\right]
\left[
\begin{matrix}
X_1\cr
\vdots\cr
X_n
\end{matrix}
\right]=
\left[
\begin{matrix}
T^{q^0}\cr
\vdots\cr
T^{q^{n-1}}
\end{matrix}
\right],
\]
where the entries of 
\[
M\left[
\begin{matrix}
X_1^{-1}\cr 
&\ddots\cr
&&X_n^{-1}
\end{matrix}
\right]
\]
belong to $F[X_1^{q-1},\dots,X_n^{q-1}]\subset F[X_1^m,\dots,X_n^m]$. Thus
\[
\left[
\begin{matrix}
X_1\cr
\vdots\cr
X_n
\end{matrix}
\right]=
\left[
\begin{matrix}
X_1\cr 
&\ddots\cr
&&X_n
\end{matrix}
\right]M^{-1}
\left[
\begin{matrix}
T^{q^0}\cr
\vdots\cr
T^{q^{n-1}}
\end{matrix}
\right],
\]
where the entries of 
\[
\left[
\begin{matrix}
X_1\cr 
&\ddots\cr
&&X_n
\end{matrix}
\right]M^{-1}
\]
belong to $F(X_1^{q-1},\dots,X_n^{q-1})$. We have
\[
M^{-1}=\frac 1{\Delta(X_1,\dots,X_n)}\left[\begin{matrix}\Delta_0&-\Delta_1&\cdots&(-1)^{n-1}\Delta_{n-1}\cr
\cr
&&*\cr
\cr
\end{matrix}\right],
\]
where
\begin{equation}\label{delta_i}
\Delta_i=\left|\begin{matrix}X_2^{q^0}&\cdots&X_n^{q^0}\cr
\vdots&&\vdots\cr
X_2^{q^{i-1}}&\cdots&X_n^{q^{i-1}}\cr
X_2^{q^{i+1}}&\cdots&X_n^{q^{i+1}}\cr
\vdots&&\vdots\cr
X_2^{q^{n-1}}&\cdots&X_n^{q^{n-1}}
\end{matrix}\right|,\qquad 0\le i\le n-1.
\end{equation}
Hence
\begin{equation}\label{X1}
X_1=\sum_{i=0}^{n-1}\frac{(-1)^iX_1\Delta_i}{\Delta(X_1,\dots,X_n)}T^{q^i},
\end{equation}
where 
\[
\frac{X_1\Delta_i}{\Delta(X_1,\dots,X_n)}\in F(X_1^{q-1},\dots,X_n^{q-1}).
\]
The determinant $\Delta_i$ can be computed by using a normal basis of $\f_{q^n}$ over $\f_q$ or by using the roots of any irreducible polynomial of degree $n$ over $\f_q$; see the appendix for the details.

\subsection{The general case}\

Let $\Lambda=\{0,1,\dots,m-1\}^n$. For $t\ge 0$,
\[
T^t=\sum_{i_1,\dots,i_n}\binom t{i_1,\dots,i_n}X_1^{i_1}\cdots X_n^{i_n}=\sum_{\lambda=(\lambda_1,\dots,\lambda_n)\in\Lambda}a(t,\lambda)X_1^{\lambda_1}\cdots X_n^{\lambda_n},
\]
where
\begin{align*}
a(t,\lambda)\,&=X_1^{-\lambda_1}\cdots X_n^{-\lambda_n}\sum_{(i_1,\dots,i_n)\equiv\lambda\,({\rm mod}\, m)}\binom t{i_1,\dots,i_n}X_1^{i_1}\cdots X_n^{i_n}\cr
&\in F(X_1^m,\dots,X_n^m).
\end{align*}
So, we have a linear system in $X_1^{\lambda_1}\cdots X_n^{\lambda_n}$, $\lambda\in \Lambda$:
\[
\sum_{\lambda\in\Lambda}a(t,\lambda)X_1^{\lambda_1}\cdots X_n^{\lambda_n}=T^t,\quad 0\le t\le m^n-1,
\]
whose coefficient matrix is 
\[
A=[a(t,\lambda)]_{0\le t\le m^n-1,\,\lambda\in\Lambda}.
\]
Let $\epsilon\in\overline F$ be a primitive $m$th root of unity. Since
\[
\sum_{i=0}^{m-1}\epsilon^{ij}=\begin{cases}
0&\text{if}\ j\not\equiv 0\pmod m,\cr
m&\text{if}\ j\equiv 0\pmod m,
\end{cases}
\]
we have
\begin{align*}
&a(t,\lambda)\cr
&=X_1^{-\lambda_1}\cdots X_n^{-\lambda_n}\sum_{i_1,\dots,i_n}\binom t{i_1,\dots,i_n}X_1^{i_1}\cdots X_n^{i_n}\prod_{k=1}^n\Bigl(\frac 1m\sum_{j_k=0}^{m-1}\epsilon^{(i_k-\lambda_k)j_k}\Bigr)\cr
&=m^{-n}X_1^{-\lambda_1}\cdots X_n^{-\lambda_n}\sum_{i_1,\dots,i_n}\binom t{i_1,\dots,i_n}X_1^{i_1}\cdots X_n^{i_n}\prod_{k=1}^n\Bigl(\sum_{j_k=0}^{m-1}\epsilon^{(i_k-\lambda_k)j_k}\Bigr)\cr
&=m^{-n}X_1^{-\lambda_1}\cdots X_n^{-\lambda_n}\sum_{i_1,\dots,i_n}\binom t{i_1,\dots,i_n}X_1^{i_1}\cdots X_n^{i_n}\sum_{(j_1,\dots,j_n)\in\Lambda}\epsilon^{\langle i-\lambda,j\rangle}\cr
&\kern17.5em (i=(i_1,\dots,i_n),\ j=(j_1,\dots,j_n))\cr
&=m^{-n}X_1^{-\lambda_1}\cdots X_n^{-\lambda_n}\sum_{j\in\Lambda}\epsilon^{-\langle\lambda,j\rangle}\sum_{i_1,\dots,i_n}\binom t{i_1,\dots,i_n}X_1^{i_1}\cdots X_n^{i_n}\epsilon^{\langle i,j\rangle}\cr
&=m^{-n}X_1^{-\lambda_1}\cdots X_n^{-\lambda_n}\sum_{j\in\Lambda}\epsilon^{-\langle\lambda,j\rangle}\sum_{i_1,\dots,i_n}\binom t{i_1,\dots,i_n}(\epsilon^{j_1}X_1)^{i_1}\cdots(\epsilon^{j_n}X_n)^{i_n}\cr
&=m^{-n}X_1^{-\lambda_1}\cdots X_n^{-\lambda_n}\sum_{j\in\Lambda}\epsilon^{-\langle\lambda,j\rangle}\Bigl(\sum_{k=1}^n\epsilon^{j_k}X_k\Bigr)^t.
\end{align*}
Therefore
\[
A=m^{-n}BCD,
\]
where
\[
B=\Bigl[\Bigl(\sum_{k=1}^n\epsilon^{j_k}X_k\Bigr)^t\Bigr]_{0\le t\le m^n-1,\, j\in\Lambda}
\]
is a nonsingular Vandermonde matrix,
\[
C=\bigl[\epsilon^{-\langle j,\lambda\rangle}\bigr]_{j\in\Lambda,\,\lambda\in\Lambda},
\]
and $D$ is the $m^n\times m^n$ diagonal matrix labeled by $\Lambda\times\Lambda$ whose $(\lambda,\lambda)$ entry is $X_1^{-\lambda_1}\cdots X_n^{-\lambda_n}$. From the orthogonality of the characters of $(\Bbb Z/m\Bbb Z)^n$, we have
\[
C^{-1}=m^{-n}\bigl[\epsilon^{\langle\lambda,j\rangle}\bigr]_{\lambda\in\Lambda\,j\in\lambda}.
\]
Hence
\begin{equation}\label{3.1}
A^{-1}=m^nD^{-1}C^{-1}B^{-1}=D^{-1}\bigl[\epsilon^{\langle\lambda,j\rangle}\bigr]_{\lambda\in\Lambda\,j\in\lambda} B^{-1}.
\end{equation}
Let
\[
b_j=\sum_{k=1}^n\epsilon^{j_k}X_k,\qquad j=(j_1,\dots,j_n)\in\Lambda.
\]
Then
\[
B=\bigl[b_j^t\bigr]_{0\le t\le m^n-1,\, j\in\Lambda}.
\]
Although the inverse of a nonsingular Vandermonde matrix is well known \cite{Macon-Spitzbart-AMM-1958}, we include a brief derivation of $B^{-1}$ for the reader's conevenience. Let 
\begin{equation}\label{3.2}
B^{-1}=[a_{jt}]_{j\in\Lambda,\,0\le t\le m^n-1}.
\end{equation}
Then 
\[
\sum_{t=0}^{m^n-1}a_{jt}b^t_i=\delta_{ji},\qquad i\in\Lambda.
\]
Hence 
\[
\sum_{t=0}^{m^n-1}a_{jt}X^t=\frac{\prod_{i\ne j}(X-b_i)}{\prod_{i\ne j}(b_j-b_i)}.
\]
So
\begin{equation}\label{3.3}
a_{jt}=(-1)^{m-1-t}\,\frac{s_{m^n-1-t}((b_i)_{i\ne j})}{\prod_{i\ne j}(b_j-b_i)},
\end{equation}
where $s_{m^n-1-t}(\ )$ is the $(m^n-1-t)$-th elementary symmetric function. Now by \eqref{3.1} and \eqref{3.2}, we have
\[
X_1=X_1\sum_{j\in\Lambda}\sum_{t=0}^{m^n-1}\epsilon^{j_1}a_{jt}T^t=\sum_{t=0}^{m^n-1}\Bigl(X_1\sum_{j\in\Lambda}\epsilon^{j_1}a_{jt}\Bigr)T^t,
\]
where $X_1\sum_{j\in\Lambda}\epsilon^{j_1}a_{jt}\in F(X_1^m,\dots,X_n^m)$.

%%%%%%%%%%%%%%%%%%%%%%%%%%%%%%%%%%%%%%%%%%%
%  section appendix
%%%%%%%%%%%%%%%%%%%%%%%%%%%%%%%%%%%%%%%%%%%
\section*{Appendix. Computation of $\Delta_i$}

Let $\Delta_i$ be the determinant in \eqref{delta_i}. We show that it can be computed in two ways: by using a normal basis of $\f_{q^n}$ over $\f_q$ or by using the roots of an irreducible polynomial of degree $n$ over $\f_q$. 

\subsection*{A1. Computation of $\boldsymbol{\Delta_i}$ using a normal basis of $\boldsymbol{\f_{q^n}}$ over $\boldsymbol{\f_q}$}\

For $z\in\f_{q^n}$, define
\[
M(z)=\left[\begin{matrix}
z&z^q&\cdots&z^{q^{n-1}}\cr
z^q&z^{q^2}&\cdots&z\cr
\vdots&&\vdots\cr
z^{q^{n-1}}&z&\cdots&z^{q^{n-2}}
\end{matrix}
\right].
\]
Then $z,z^q,\dots,z^{q^{n-1}}$ form a normal basis of $\f_{q^n}$ over $\f_q$ if and only if $\det M(z)=\Delta(z,z^q,\dots,z^{q^{n-1}})\ne 0$. Choose $z\in\f_{q^n}$ such that $\det M(z)\ne 0$. Then $M(z)^{-1}=M(w)$ for some $\in\f_{q^n}$; see \cite[\S2]{Bartoli-Hou-arXiv2008.03432}. Since
\[
M(z)^{-1}=\frac 1{\det M(z)}\left[\begin{matrix}a_{11}&\cdots\cr \vdots\end{matrix}\right],
\]
where 
\begin{align*}
a_{11}\,&=\left|\begin{matrix}
z^{q^2}&z^{q^3}&\cdots&z\cr
z^{q^3}&z^{q^4}&\cdots&z^q\cr
\vdots&\vdots&&\vdots\cr 
z&z^q&\cdots&z^{q^{n-2}}\end{matrix}\right|=
\left|\begin{matrix}
z&z^{q}&\cdots&z^{q^{n-2}}\cr
z^{q}&z^{q^2}&\cdots&z^{q^{n-1}}\cr
\vdots&\vdots&&\vdots\cr 
z^{q^{n-2}}&z^{q^{n-1}}&\cdots&z^{q^{n-4}}\end{matrix}\right|^{q^2}\cr
&=\Delta(z,z^q,\dots,z^{q^{n-2}})^{q^2},
\end{align*}
We have 
\[
w=\frac{a_{11}}{\det M(z)}=\frac{\Delta(z,z^q,\dots,z^{q^{n-2}})^{q^2}}{\Delta(z,z^q,\dots,z^{q^{n-1}})}.
\]
From $M(w)M(z)=I$, we have
\[
\sum_{k=0}^{n-1}(w^{q^i})^{q^k}(z^{q^k})^{q^j}=\delta_{ij}.
\]
Hence
\begin{align*}
\Delta_i\,&=(-1)^i\left|\begin{matrix}\sum_{k=0}^{n-1}(w^{q^i})^{q^k}(z^{q^k})^{q^0}&X_2^{q^0}&\cdots&X_n^{q^0}\cr
\vdots&\vdots&&\vdots\cr
\sum_{k=0}^{n-1}(w^{q^i})^{q^k}(z^{q^{k}})^{q^{n-1}}&X_2^{q^{n-1}}&\cdots& X_n^{q^{n-1}}\end{matrix}\right|\cr
&=(-1)^i\sum_{k=0}^{n-1}(w^{q^i})^{q^k}\left|\begin{matrix}(z^{q^k})^{q^0}&X_2^{q^0}&\cdots&X_n^{q^0}\cr
\vdots&\vdots&&\vdots\cr
(z^{q^k})^{q^{n-1}}&X_2^{q^{n-1}}&\cdots& X_n^{q^{n-1}}\end{matrix}\right|\cr
&=(-1)^i\sum_{k=0}^{n-1}w^{q^{i+k}}\Delta(z^{q^k},X_2,\dots,X_n)\cr
&=(-1)^i\sum_{k=0}^{n-1}\frac{\Delta(z,z^q,\dots,z^{q^{n-2}})^{q^{i+k+2}}}{\Delta(z,z^q,\dots,z^{q^{n-1}})^{q^{i+k}}}\Delta(z^{q^k},X_2,\dots,X_n).
\end{align*}

\subsection*{A2. Computation of $\boldsymbol{\Delta_i}$ using the roots of an irreducible polynomial of degree $\boldsymbol n$ over $\boldsymbol{\f_q}$}\

Let $\alpha\in\f_{q^n}$ be of degree $n$ over $\f_q$ and let $\alpha_i=\alpha^{q^i}$, $0\le i\le n-1$. Let
\begin{equation}\tag{A1}\label{g}
g(X)=\frac{\prod_{i=1}^{n-1}(X-\alpha_i)}{\prod_{i=1}^{n-1}(\alpha_0-\alpha_i)}=a_0+\cdots+a_{n-1}X^{n-1},
\end{equation}
which satisfies $g(\alpha_i)=\delta_{i0}$, $0\le i\le n-1$. Then
\[
\left[\begin{matrix}\alpha^0&\alpha^1&\cdots&\alpha^{n-1}\cr
\alpha^{0q}&\alpha^{1q}&\cdots&\alpha^{(n-1)q}\cr
\vdots&\vdots&&\vdots\cr
\alpha^{0q^{n-1}}&\alpha^{1q^{n-1}}&\cdots&\alpha^{(n-1)q^{n-1}}\end{matrix}\right]
\left[\begin{matrix}a_0\cr\vdots\cr a_{n-1}\end{matrix}\right]=\left[\begin{matrix}1\cr 0\cr\vdots\cr 0\end{matrix}\right].
\]
It follows that
\[
\left[\begin{matrix}\alpha^0&\alpha^1&\cdots&\alpha^{n-1}\cr
\alpha^{0q}&\alpha^{1q}&\cdots&\alpha^{(n-1)q}\cr
\vdots&\vdots&&\vdots\cr
\alpha^{0q^{n-1}}&\alpha^{1q^{n-1}}&\cdots&\alpha^{(n-1)q^{n-1}}\end{matrix}\right]^{-1}=\left[\begin{matrix} a_0&a_0^q&\cdots&a_0^{q^{n-1}}\cr
\vdots&\vdots&&\vdots\cr
a_{n-1}&a_{n-1}^q&\cdots&a_{n-1}^{q^{n-1}}\end{matrix}\right],
\]
and hence
\[
\sum_{k=0}^{n-1}\alpha^{kq^i}a_k^{q^j}=\delta_{ij}.
\]
Therefore
\begin{align}\tag{A2}\label{A2}
\Delta_i\,&=(-1)^i\left|\begin{matrix}\sum_{k=0}^{n-1}\alpha^{kq^i}a_k^{q^0}&X_2^{q^0}&\cdots&X_n^{q^0}\cr
\vdots&\vdots&&\vdots\cr
\sum_{k=0}^{n-1}\alpha^{kq^i}a_k^{q^{n-1}}&X_2^{q^{n-1}}&\cdots&X_n^{q^{n-1}}\end{matrix}\right|\\
&=(-1)^i\sum_{k=0}^{n-1}\alpha^{kq^i}\left|\begin{matrix}a_k^{q^0}&X_2^{q^0}&\cdots&X_n^{q^0}\cr
\vdots&\vdots&&\vdots\cr
a_k^{q^{n-1}}&X_2^{q^{n-1}}&\cdots&X_n^{q^{n-1}}\end{matrix}\right|\cr
&=(-1)^i\sum_{k=0}^{n-1}\alpha^{kq^i}\Delta(a_k,X_2,\dots,X_n).\nonumber
\end{align}

In \eqref{g} and \eqref{A2},
\[
a_k=\frac{(-1)^{n-1-k}s_{n-i-k}(\alpha_1,\dots,\alpha_{n-1})}{\prod_{i=1}^{n-1}(\alpha_0-\alpha_i)}.
\]
However, it is more effective to compute $a_k$ in terms of the minimal polynomial $f(X)$ of $\alpha$ over $\f_q$. Let
\[
f(X)=\prod_{i=0}^{n-1}(X-\alpha_i)=X^n+b_{n-1}X^{n-1}+\cdots+b_0.
\]
Then
\[
f'(\alpha)=\prod_{i=1}^{n-1}(\alpha-\alpha_i)
\]
and 
\[
f'(\alpha)(X-\alpha)(a_0+\cdots+a_{n-1}X^{n-1})=b_0+\cdots+b_{n-1}X^{n-1}+X^n,
\]
that is,
\[
f'(\alpha)\left[\begin{matrix}-\alpha\cr
1&-\alpha\cr
&\cdot&\cdot\cr
&&\cdot&\;\cdot\cr
&&&\;\cdot&\;\cdot\cr
&&&&1&-\alpha\end{matrix}\right]
\left[\begin{matrix} a_0\cr\vdots\cr a_{n-1}\end{matrix}\right]=\left[\begin{matrix} b_0\cr\vdots\cr b_{n-1}\end{matrix}\right].
\]
In the above, the inverse of the $n\times n$ matrix is
\[
-\left[\begin{matrix}
\alpha^{-1}\cr
\alpha^{-2}&\alpha^{-1}\cr
\cdot&\cdot&\cdot\cr
\cdot&&\cdot&\ \cdot\cr
\cdot&&&\ \cdot&\cdot\cr
\alpha^{-n}&\cdot&\cdot&\ \cdot&\alpha^{-2}&\alpha^{-1}\end{matrix}\right].
\]
Hence
\begin{equation}\tag{A3}\label{A3}
a_k=\frac{-1}{f'(\alpha)}\sum_{j=0}^k\alpha^{-k-1+j}b_j.
\end{equation}
Making substitution \eqref{A3} in \eqref{A2} gives
\begin{align*}
\Delta_i\,&=(-1)^i\sum_{k=0}^{n-1}\alpha^{kq^i}\Delta\Bigl(\frac{-1}{f'(\alpha)}\sum_{j=0}^k\alpha^{-k-1+j}b_j,X_2,\dots,X_n\Bigr)\cr
&=(-1)^{n+i}\sum_{k=0}^{n-1}\alpha^{kq^i}\sum_{j=0}^k b_j\Delta\Bigl(\frac{\alpha^{-k-1+j}}{f'(\alpha)},X_2,\dots,X_n\Bigr).
\end{align*}

%%%%%%%%%%%%%%%%%%%%%%%%%%%%%%%%%%%%%%%%%%%

%%%%%%%%%%%%%%%%%%%%%%%%%%%%%%%%%%%%%%%%%%%

\end{document}